\documentclass[]{article}

\usepackage[T1]{fontenc}
\usepackage{amsmath,amssymb,amsthm,mathrsfs}
\usepackage{pxfonts}
\usepackage{sectsty}
\usepackage{microtype}
\usepackage{fancyhdr}
\usepackage[hidelinks]{hyperref}
\hypersetup{
  pdftitle={On non-monotonicity of logarithmic energy for random matrices},
  pdfauthor={Theodoros Assiotis}
}
\allowdisplaybreaks
\sectionfont{\scshape\centering\fontsize{11}{14}\selectfont}
\subsectionfont{\scshape\fontsize{11}{14}\selectfont}
\newcommand\shorttitle{Non-monotonicity of log-energy}
\newcommand\authors{T. Assiotis}
\newtheorem{thm}{Theorem}[section]
\newtheorem{lem}[thm]{Lemma}
\newcommand{\E}{\mathbb{E}}
\newcommand{\Prob}{\mathbb{P}}
\newcommand{\R}{\mathbb{R}}
\newcommand{\C}{\mathbb{C}}
\newcommand{\cE}{\mathfrak{E}}
\newcommand{\cF}{\mathfrak{F}}
\newcommand{\1}{\mathbf{1}}
\newcommand{\Tr}{\operatorname{Tr}}
\newcommand{\Col}{\operatorname{Col}}
\newcommand{\pentry}{\mathfrak{p}}
\newcommand{\qentry}{\mathfrak{q}}

\title{\large\bf ON NON-MONOTONICITY OF LOGARITHMIC ENERGY\\
FOR RANDOM MATRICES}
\author{\small THEODOROS ASSIOTIS}
\date{}

\begin{document}
\maketitle

\begin{abstract}
We construct a finite-energy Wigner counterexample and a concrete one-parameter family of Gaussian-regularised Bernoulli entry laws yielding counterexamples to the conjectural dimensional monotonicity of the quadratically penalised logarithmic energy for mean empirical spectral distributions by Chafa\"i, Dadoun, and Youssef \cite{ChafaiDadounYoussef}.
\end{abstract}

\section{Introduction}\label{SectionIntroduction}

\subsection{Background}

Monotonicity of entropy-like functionals is a recurrent mechanism by
which an evolution or an approximation is driven towards equilibrium.
The Boltzmann H-theorem and the free-energy dissipation principle
for ergodic Markov dynamics are classical instances of this mechanism;
see for example \cite{Chafai2015,Villani2008}.  In central-limit problems, entropy
is monotone under normalised convolution in the classical setting
\cite{ArtsteinBallBartheNaor2004,MadimanBarron2007} and admits free
analogues \cite{DadounYoussef2021,Shlyakhtenko2007}.  These examples
combine a conserved moment constraint with a variational
characterisation of the limiting law.

Random-matrix limit laws have a parallel variational structure.  The
semicircle, circular, and Marchenko--Pastur laws arise as equilibrium
measures for logarithmic energies with external fields; see
\cite{AndersonGuionnetZeitouni2010,Forrester2010,HiaiPetz2000,SaffTotik1997}.
For Gaussian Wigner matrices and suitable \(\beta\)-ensembles, the same
penalised logarithmic energies occur as large-deviation rate
functionals
\cite{AndersonGuionnetZeitouni2010,BenArousGuionnet1997,Forrester2010}.
On the real line,
logarithmic energy is also closely linked to Voiculescu's free entropy
\cite{HiaiPetz2000,Voiculescu2005}, which makes its finite-dimensional
behaviour along mean empirical spectral distributions a natural
object of study.

Motivated by these analogies, Chafa\"i, Dadoun, and Youssef formulated very intriguing
dimensional monotonicity questions for the mean empirical spectral
distributions in the Wigner, circular-law, and
Marchenko--Pastur settings  in \cite{ChafaiDadounYoussef}.  They proved the
corresponding monotonicity for the Gaussian unitary, complex Ginibre,
and square Laguerre unitary ensembles by beautiful explicit computation.  Moreover, they provided extensive numerical evidence in support of this monotonicity for various models. 

The purpose of this paper is
to show, by a finite-energy Wigner counterexample and a concrete
one-parameter Gaussian-regularised Bernoulli family, that the proposed
monotonicity in its strongest form does not hold for general Wigner matrices or general
matrices with i.i.d.\ entries. Nevertheless, it is a very interesting question to understand whether such a monotonicity property holds under more restrictive assumptions or whether it holds after a large enough dimension threshold.

We now move on to make our results precise.

For a probability measure $\mu$ on $X=\R$ or $\C$ with finite second
moment, write
\[
 m_2(\mu)\overset{\mathrm{def}}{=}\int_X |x|^2\,\mu(\mathrm{d}x),
 \qquad
 \cE(\mu)\overset{\mathrm{def}}{=}
 \iint_{X\times X}\log\frac1{|x-y|}\,\mu(\mathrm{d}x)\mu(\mathrm{d}y).
\]
The negative part of the logarithmic kernel is integrable under this
moment assumption, so $\cE(\mu)\in(-\infty,+\infty]$ is well defined.
The relevant penalised energies are
\[
 \cF_{\R}(\mu)\overset{\mathrm{def}}{=}\frac12m_2(\mu)+\cE(\mu),
 \qquad
 \cF_{\C}(\mu)\overset{\mathrm{def}}{=}m_2(\mu)+\cE(\mu).
\]
Their equilibrium measures are, respectively, the standard semicircle
law and the circular law,
\[
 \sigma(\mathrm{d}x)=\frac1{2\pi}\sqrt{(4-x^2)_+}\,\mathrm{d}x,
 \qquad
 \omega(\mathrm{d}z)=\frac1\pi\1_{\{|z|\leq1\}}\,\mathrm{d}^2z.
\]
They satisfy
\begin{equation}\label{EqEquilibriumValues}
 m_2(\sigma)=1,\quad m_2(\omega)=\frac12,\quad
 \cE(\sigma)=\cE(\omega)=\frac14,\quad
 \cF_{\R}(\sigma)=\cF_{\C}(\omega)=\frac34.
\end{equation}
For an $n\times n$ matrix $\mathbf{A}$, let
\[
 \mu_{\mathbf{A}}\overset{\mathrm{def}}{=}
 \frac1n\sum_{j=1}^n\delta_{\lambda_j(\mathbf{A})}
\]
be its empirical spectral distribution, with algebraic multiplicity.

By a Wigner sequence we mean
$\mathbf{M}_n=(\mathsf{W}_{ij})_{1\leq i,j\leq n}$ obtained from an
infinite Hermitian array whose upper-triangular entries are independent,
whose strictly upper-triangular entries have a common centered law with
$\E|\mathsf{W}_{12}|^2=1$, and whose diagonal entries have a common real
law with finite second moment. In the i.i.d.\ setting we restrict to
infinite arrays of independent copies of a complex random variable
$\mathsf{X}$ satisfying $\E\mathsf{X}=0$ and $\E|\mathsf{X}|^2=1$.
Question~1.1 of \cite{ChafaiDadounYoussef}, and the centered case of
their Question~1.2, ask whether
\begin{align}
 \cF_{\R}\left(\E[\mu_{\mathbf{M}_n/\sqrt n}]\right)
 &\searrow\cF_{\R}(\sigma)
 &&\text{for Wigner matrices},\label{EqQuestionReal}\\
 \cF_{\C}\left(\E[\mu_{\mathbf{M}_n/\sqrt n}]\right)
 &\searrow\cF_{\C}(\omega)
 &&\text{for matrices with i.i.d.\ entries}.\label{EqQuestionComplex}
\end{align}
In the Wigner question one additionally imposes
$m_2(\E[\mu_{\mathbf{M}_n/\sqrt n}])=1$.

\subsection{Main results}\label{SectionMainResults}

The first result disproves Wigner dimensional monotonicity by starting
at the semicircle equilibrium law and producing a strictly larger
energy in dimension two, with all energies finite.
\begin{thm}\label{ThmWignerMain}
Let $(\mathsf{W}_{ij})_{1\leq i\leq j<\infty}$ be independent with
common law $\sigma$, set $\mathsf{W}_{ji}=\mathsf{W}_{ij}$ for $i<j$,
and define
\[
 \mathbf{M}_n=(\mathsf{W}_{ij})_{1\leq i,j\leq n},
 \qquad
 \rho_n=\E[\mu_{\mathbf{M}_n/\sqrt n}].
\]
Then $(\mathbf{M}_n)_{n\geq1}$ is a real symmetric Wigner sequence,
each $\rho_n$ has a compactly supported bounded density, and
\[
 m_2(\rho_n)=1,\qquad \cE(\rho_n)\in\R,\qquad n\geq1.
\]
Moreover, $\rho_1=\sigma$, $\rho_2((2,\infty))>0$, and, with
$\pentry_0=\sigma((3/2,2))$,
\[
 \cF_{\R}(\rho_2)-\cF_{\R}(\rho_1)
 \geq\frac{\pentry_0^3}{2}\left(\frac34-\log2\right)>0,
 \qquad
 \cF_{\R}(\rho_1)=\frac34.
\]
\end{thm}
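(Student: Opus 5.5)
The plan is to do the dimension-two computation explicitly and compare $\rho_2$ to $\sigma$ through the variational characterisation of the semicircle law. The statement $\rho_1=\sigma$ is immediate, since $\mathbf{M}_1=\mathsf{W}_{11}$ has law $\sigma$. That the array is a real symmetric Wigner sequence is immediate from $m_2(\sigma)=1$ and the centering of $\sigma$. For the second moment,
\[
m_2(\rho_n)=\frac{1}{n^2}\E\Tr\mathbf{M}_n^2=\frac1{n^2}\sum_{i,j}\E\mathsf{W}_{ij}^2=1 .
\]

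\emph{Regularity of $\rho_n$.} The entries lie in $[-2,2]$, so $\rho_n$ is compactly supported. For a bounded density I would use the following translation trick. Write $\mathbf{M}_n=t\,\mathrm{I}+\mathbf{R}$, where the translation parameter $t$ is taken along a direction in which the entries have a jointly bounded density; for $n=2$, $t=(a+c)/2$ with $a=\mathsf{W}_{11}$, $c=\mathsf{W}_{22}$. Then every eigenvalue is $t/\sqrt n$ plus a function of the complementary variables $(a-c,\ \text{off-diagonal entries})$. The pair $(a+c,a-c)$ has joint density $\frac12\sigma(\frac{u+v}2)\sigma(\frac{u-v}2)\leq\frac12\|\sigma\|_\infty^2$. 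Integrating first in $u$ gives $\Prob(\lambda_j/\sqrt n\in I)\leq C|I|$, with $v$ ranging over a bounded set. The same argument works for general $n$, using $\Tr\mathbf{M}_n$ against the orthogonal complement of the diagonal.

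Bounded density plus compact support gives $\cE(\rho_n)\in\R$, since $\log|x-y|$ is integrable against $\mathrm{d}x\,\mathrm{d}y$ on compacts.

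\emph{Energy comparison.} I will use the standard equilibrium identity. Let
\[
V(x)=\tfrac12x^2+2\int\log\tfrac1{|x-y|}\,\sigma(\mathrm{d}y).
\]
Then $V\equiv C$ on $[-2,2]$, and for $|x|>2$ one has $V(x)-C=g(|x|)$ with
\[
g(x)=\tfrac x2\sqrt{x^2-4}-2\log\tfrac{x+\sqrt{x^2-4}}2 ,
\]
which is increasing on $(2,\infty)$ since $g'(x)=\sqrt{x^2-4}>0$. For compactly supported $\mu$ with finite energy,
\[
\cF_\R(\mu)-\cF_\R(\sigma)=\cE(\mu-\sigma)+\int (V-C)\,\mathrm{d}(\mu-\sigma).
\]
Positive definiteness of the logarithmic energy on compactly supported signed measures of total mass zero gives $\cE(\mu-\sigma)\geq0$. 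Hence
\[
\cF_\R(\rho_2)-\tfrac34\geq\int_{|x|>2}g(|x|)\,\rho_2(\mathrm{d}x).
\]

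\emph{Lower bound.} Let $A$ be the event that $a$, $c$ and $b=\mathsf{W}_{12}$ all lie in $(3/2,2)$, so $\Prob(A)=\pentry_0^3$. On $A$ the top eigenvalue of $\mathbf{M}_2/\sqrt2$ is
\[
\frac{a+c+\sqrt{(a-c)^2+4b^2}}{2\sqrt2}>\frac{3+3}{2\sqrt2}=\frac3{\sqrt2}>2 .
\]
Since $\rho_2$ puts mass $1/2$ on each eigenvalue, $\rho_2((3/\sqrt2,\infty))\geq\pentry_0^3/2$, and in particular $\rho_2((2,\infty))>0$. By monotonicity of $g$ it then suffices to compute $g(3/\sqrt2)$. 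Here $\sqrt{x^2-4}=1/\sqrt2$, so the first term is $\tfrac34$, and the logarithm is $\log\sqrt2$, giving
\[
g(3/\sqrt2)=\tfrac34-\log2>0 .
\]
This yields the stated bound.

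The main obstacle is justifying the equilibrium identity and the sign $\cE(\mu-\sigma)\geq0$ rigorously for $\mu=\rho_2$. This means checking that all energies are finite, which is why the bounded-density step comes first, and verifying the explicit formula for $V-C$ off the support. I would quote both from standard logarithmic potential theory, e.g.\ \cite{SaffTotik1997}.
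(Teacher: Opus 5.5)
Your proposal is correct and follows essentially the same route as the paper. The common steps are the bounded density from translating along the diagonal, the expansion $\cF_{\R}(\mu)-\frac34=\cE(\mu-\sigma)+\int v_{\mathrm{sc}}\,\mathrm{d}\mu$ with positivity of the logarithmic energy on mass-zero signed measures, and the event $\{a,b,c\in(3/2,2)\}$, which forces an eigenvalue beyond $3/\sqrt2$. Your closed form $g$ is exactly the paper's $v_{\mathrm{sc}}(x)=\int_2^{|x|}\sqrt{r^2-4}\,\mathrm{d}r$, and the only cosmetic differences are that you use the explicit $2\times2$ eigenvalue formula instead of the Rayleigh quotient at $(1,1)^{\mathsf T}/\sqrt2$, and an orthogonal trace-direction change of variables instead of the paper's shear $d_i=t+u_i$.
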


The second result disproves dimensional monotonicity for i.i.d.\
matrices with smooth entry laws and finite-energy mean spectral
distributions. Its proof compares eigenvalue clustering in dimensions
two and three as the Gaussian regularisation vanishes.
\begin{thm}\label{ThmComplexMain}
Let $\pentry=1/256$ and $\qentry=255/256$. Let $\mathsf{X}$ and
$\mathsf{G}$ be independent, with
\begin{equation}\label{EqBernoulliLaw}
 \Prob\left(\mathsf{X}=-\frac1{\sqrt{255}}\right)=\qentry,
 \qquad
 \Prob(\mathsf{X}=\sqrt{255})=\pentry,
 \qquad
 \Prob(\mathsf{G}\in\mathrm{d}z)=\frac1\pi \mathrm{e}^{-|z|^2}\,\mathrm{d}^2z.
\end{equation}
For $\varepsilon>0$, put
\[
 \mathsf{Y}_\varepsilon=
 \frac{\mathsf{X}+\varepsilon\mathsf{G}}{\sqrt{1+\varepsilon^2}},
\]
let $\mathbf{M}_{n,\varepsilon}$ have i.i.d.\ entries with this law,
and set
\[
 \nu_{n,\varepsilon}=
 \E[\mu_{\mathbf{M}_{n,\varepsilon}/\sqrt n}].
\]
Then $\mathsf{Y}_\varepsilon$ is centered, satisfies
$\E|\mathsf{Y}_\varepsilon|^2=1$, and has a strictly positive
real-analytic density, all moments, and sub-Gaussian tails. Moreover,
\[
 \cE(\nu_{n,\varepsilon})\in\R,\qquad n\geq1,\quad\varepsilon>0,
\]
and there exists $\varepsilon_0>0$ such that
\[
 \cF_{\C}(\nu_{3,\varepsilon})>\cF_{\C}(\nu_{2,\varepsilon}),
 \qquad 0<\varepsilon<\varepsilon_0.
\]
\end{thm}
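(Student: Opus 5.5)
The plan is to settle the routine properties first and then compare the two energies through their leading divergence as \(\varepsilon\to0\). Since \(\E\mathsf{X}=-\qentry/\sqrt{255}+\pentry\sqrt{255}=0\) and \(\E\mathsf{X}^2=\qentry/255+255\pentry=1\), the variable \(\mathsf{Y}_\varepsilon\) is centered with unit variance. Its law is a two-component mixture of complex Gaussians, which gives the positive real-analytic density, all moments and the sub-Gaussian tails at once.

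For finiteness of \(\cE(\nu_{n,\varepsilon})\), the negative part of the kernel is handled by moments. For the positive part I will bound, uniformly in \(y\in\C\),
\[
\int\log\frac1{|x-y|}\,\nu_{n,\varepsilon}(\mathrm{d}x)=\frac1n\E\log\frac1{|\det(\mathbf{M}_{n,\varepsilon}/\sqrt n-y)|}.
\]
To do this, condition on every entry except the Gaussian part \(\varepsilon\mathsf{G}_{11}\) of the \((1,1)\) entry. The determinant is then affine, \(\alpha+\beta\varepsilon\mathsf{G}_{11}\), where \(\beta\) is the determinant of the \((1,1)\) minor (again shifted by \(y\)). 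The elementary bound \(\E\log^+\frac1{|\alpha+\beta\varepsilon\mathsf{G}|}\leq C+\log^+\frac1{|\beta|\varepsilon}\) then lets me induct on the minor down to a \(1\times1\) matrix. This gives
\[
\sup_y\int\log^+\frac1{|x-y|}\,\nu_{n,\varepsilon}(\mathrm{d}x)\leq C_n(1+\log(1/\varepsilon)),
\]
and hence \(\cE(\nu_{n,\varepsilon})\in\R\).

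For the inequality, note that \(m_2(\nu_{n,\varepsilon})\leq n^{-2}\E\|\mathbf{M}_{n,\varepsilon}\|_F^2=1\) stays bounded, so only the energy matters. The goal is an expansion
\[
\cF_{\C}(\nu_{n,\varepsilon})=\kappa_n\log\frac1\varepsilon+O(1),\qquad n=2,3,
\]
or at least an upper bound of this form for \(n=2\) and a lower bound for \(n=3\), and then to show \(\kappa_3>\kappa_2\). Heuristically, \(\nu_{n,\varepsilon}\to\nu_n^0=\E[\mu_{\mathbf{M}_n/\sqrt n}]\) for the pure Bernoulli matrix, which has atoms. An atom \(a\) of mass \(w_a\), spread by the perturbation to scale \(\varepsilon^{\gamma_a}\), contributes about \(w_a^2\gamma_a\log(1/\varepsilon)\) with \(\gamma_a\leq1\). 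The dominant configuration has all entries equal to \(-1/\sqrt{255}\); it has probability \(\qentry^{n^2}\) and gives the matrix \(-\mathbf{J}/\sqrt{255}\). This matrix has rank one and a semisimple eigenvalue \(0\) of multiplicity \(n-1\), so its zero cluster spreads at scale exactly \(\varepsilon\). Conditionally on that configuration, the cluster eigenvalues are \(\varepsilon\) times the eigenvalues of the compression \(P\mathsf{G}P\) onto \(\ker\mathbf{J}\), up to \(O(\varepsilon^2)\), and this compression has a nondegenerate density. The atom at \(0\) therefore has weight at least \(\qentry^9\cdot\frac23\) for \(n=3\), so \(\kappa_3\geq\frac49\qentry^{18}\approx0.41\). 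For \(n=2\), the main term is \(\frac14\qentry^8\approx0.24\).

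The main obstacle is making this rigorous with matching one-sided bounds.

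For the lower bound at \(n=3\), I restrict the double integral to \(x,y\) in a disc of radius \(R\varepsilon\) around \(0\), both coming from the all-\((-1)\) configuration. Using the first-order perturbation above, the contribution is at least \((\frac23\qentry^9)^2(1-o(1))\log(1/\varepsilon)\). All remaining pairs have positive parts that are nonnegative, and negative parts bounded via \(m_2\le1\), so they cannot cancel this contribution.

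For the upper bound at \(n=2\), I enumerate the \(16\) sign configurations of \(\mathsf{X}\) together with the \(\binom{4}{k}\) weighting. For each configuration I determine the eigenvalue coincidences of the \(2\times2\) Bernoulli matrix, namely its trace and discriminant, including any nilpotent Jordan blocks, where the spread is \(\varepsilon^{1/2}\) and \(\gamma=\frac12\). I also identify coincidences between eigenvalues of different configurations, which sum the weights of a shared atom. I then bound
\[
\cE(\nu_{2,\varepsilon})\leq\Big(\sum_a w_a^2\Big)\log\frac1\varepsilon+O(1),
\]
using that the explicit \(2\times2\) eigenvalue formula, under the Gaussian smoothing, gives a density near each atom bounded by \(C\varepsilon^{-2}\) on its \(\varepsilon\)-neighbourhood. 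The key quantitative check is that \(\sum_aw_a^2\) for \(n=2\), which is dominated by \(\frac14\qentry^8\) plus terms of order \(\pentry\), stays below \(\frac49\qentry^{18}\). This is where the choice \(\pentry=1/256\) is used, since \(\qentry^{10}\approx0.962\).
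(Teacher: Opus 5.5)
There is a genuine gap: your bookkeeping of the all-common configuration drops its nonzero eigenvalue, and with that omission the comparison you set up is false.

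\textbf{The miscount.} Besides the zero eigenvalue of multiplicity $n-1$, the matrix $c\mathbf{J}_n/\sqrt n$ with $c=-1/\sqrt{255}$ has the simple eigenvalue $c\sqrt n$, of spectral mass $1/n$. This eigenvalue is also smeared only at scale $\varepsilon$, so it contributes $(\qentry^{n^2}/n)^2\log(1/\varepsilon)$.

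For $n=2$ this atom has the same mass $\qentry^4/2$ as the atom at $0$. The main term is therefore $\frac12\qentry^8\approx0.485$, not $\frac14\qentry^8\approx0.24$. This is not only heuristic: your own disc argument, applied to both clusters in dimension two (the matrix is normal, so both clusters stay at scale $\varepsilon$ on a bounded Gaussian event), gives $\liminf_{\varepsilon\downarrow0}\cE(\nu_{2,\varepsilon})/\log(1/\varepsilon)\geq\frac12\qentry^8$.

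Your dimension-three lower bound keeps only the zero cluster, giving $\frac49\qentry^{18}\approx0.414$. So your key check, that $\sum_a w_a^2$ for $n=2$ stays below $\frac49\qentry^{18}$, is false. No dimension-two upper bound, however sharp, can rescue the argument as designed.

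\textbf{The repair.} Keep the cluster at $\lambda_\star=c\sqrt3$ in the dimension-three lower bound. The two disjoint discs carry masses at least $\frac23\qentry^9P_R$ and $\frac13\qentry^9P_R$, which yields the rate $\frac59\qentry^{18}\approx0.518$. At $\pentry=0$ the margin over dimension two is now only $\frac59-\frac12=\frac1{18}$. So your ``terms of order $\pentry$'' must be bounded explicitly, and this is where $\pentry=1/256$ really matters; with your numbers it would barely matter at all.

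The paper avoids your sixteen-case enumeration using convexity of $\alpha\mapsto\Col(\alpha)$ and $\Col\leq1$. Writing $\nu_2^0=\qentry^4\alpha_0+(1-\qentry^4)\beta$ gives
$\Col(\nu_2^0)\leq\frac12\qentry^4+(1-\qentry^4)\leq\frac12+2\pentry=\frac{65}{128}<\frac{595}{1152}\leq\frac59\qentry^{18}$.

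\textbf{Two secondary points.}
\begin{itemize}
\item Your dimension-two upper bound needs a global density bound $\|f\|_\infty\leq C\varepsilon^{-2}$. A bound only on $\varepsilon$-neighbourhoods of the atoms does not control $\E\log^+(\varepsilon/|Z-W|)$ over all pairs. The paper gets the global bound from the scalar part $\Tr\mathbf{G}_n/n$ of the Gaussian matrix. This part is independent of the trace-free part and of the Bernoulli array, so $\nu_{n,\varepsilon}$ is a convolution with a centered Gaussian of variance of order $\varepsilon^2$.
\item The same observation gives finiteness of $\cE(\nu_{n,\varepsilon})$ at once. Your determinant induction is workable, though your uniform-in-$y$ claim needs a $\log^+|y|$ correction, but it is unnecessary.
\end{itemize}
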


The Wigner counterexample exploits the fact that the semicircle law
already minimizes the quadratically penalised logarithmic energy.
By choosing semicircular entries, we arrange that the mean spectral
distribution in dimension one is exactly this equilibrium measure.
In dimension two, however, an event on which all three independent
entries are sufficiently large and positive forces an eigenvalue
beyond the semicircle support. The effective potential is strictly
positive outside this support, so this event yields a quantitative
increase in energy. Since the second moment remains equal to one,
the increase also occurs in the logarithmic energy itself.

The i.i.d.\ counterexample instead exploits a different structure, that of eigenvalue clustering.
The Bernoulli entry law takes a common value $c$ with probability
close to one, while a rare value ensures centering and unit second
moment. On the event that all entries take the common value, the
normalized matrix is $c\mathbf{J}_n/\sqrt n$, where $\mathbf{J}_n$
is the all-ones matrix. This normalized matrix has eigenvalue zero
of multiplicity $n-1$ and one eigenvalue $c\sqrt n$. The corresponding spectral masses
are therefore $1/2,1/2$ in dimension two and $2/3,1/3$ in dimension
three.

Small Gaussian regularisation turns these atoms into narrow eigenvalue
clusters and makes the logarithmic energies finite. Heuristically,
a cluster of mass $a$ at scale $\varepsilon$ contributes at leading
order $a^2\log(1/\varepsilon)$: the square appears because logarithmic
energy involves two independent samples from the mean spectral
distribution. The sums of squared masses for the all-common
configurations are $1/2$ and $5/9$, respectively. This difference
supplies the mechanism for the increase from dimension two to
dimension three, while the uniformly bounded second moments make the
quadratic penalty negligible on the logarithmic scale. 

Then, in dimension two, a
density bound from Gaussian smoothing controls the energy from above
through the squared atom masses of the limiting measure; convexity
then handles all exceptional Bernoulli configurations together.
In dimension three, a lower bound comes solely from the all-common
configuration, whose normality ensures that its two spectral clusters
remain at scale $\varepsilon$ under a bounded Gaussian perturbation.
The choice $\pentry=1/256$ makes the exceptional configurations
sufficiently unlikely that these elementary bounds already separate
the two dimensions, without any enumeration of their spectra (any choice $\pentry<1/216$ would make the argument go through as it is; the original counterexample used $\pentry=1/32$ which required significantly more technicalities related to enumeration).

\paragraph{Tool and computational resource disclosure} In the course of the research presented here I have been using AI tools, mainly ChatGPT 5.5, 5.6 Sol for ideation, technical help, editing and checking the proofs and general editing and proofreading of the manuscript, at the level of a co-author. Any mistakes are my own responsibility.

\section{Elementary logarithmic-energy bounds}\label{SectionPreliminaries}

The following bounds control the positive and negative parts of the
logarithmic kernel; they will ensure that the energies in both
constructions are finite. For $X=\R$ or $\C$ and a probability
measure $\mu$ on $X$ with finite second moment, write
\[
 I_+(\mu)=\iint_{X\times X}\log^+\frac1{|x-y|}\,\mu(\mathrm{d}x)\mu(\mathrm{d}y),
 \qquad
 I_-(\mu)=\iint_{X\times X}\log^+|x-y|\,\mu(\mathrm{d}x)\mu(\mathrm{d}y).
\]
The inequality $\log^+r\leq r^2/2$ gives
\begin{equation}\label{EqNegativePart}
 I_-(\mu)\leq\frac12\iint_{X\times X}|x-y|^2\,\mu(\mathrm{d}x)\mu(\mathrm{d}y)
 =m_2(\mu)-\left|\int_X x\,\mu(\mathrm{d}x)\right|^2
 \leq m_2(\mu).
\end{equation}
If $\mu$ also has bounded density $f$, then translation invariance and
Tonelli's theorem give
\[
 I_+(\mu)\leq
 \|f\|_\infty\int_{\{u\in X:|u|<1\}}\log\frac1{|u|}\,\mathrm{d}u<\infty.
\]
Here and below Lebesgue measure is understood in the ambient real
dimension. Thus a bounded density and a finite second moment imply
absolute integrability of the logarithmic kernel.

For a finitely supported probability measure $\alpha$ on $\C$, define
\[
 \Col(\alpha)\overset{\mathrm{def}}{=}
 \sum_{a\in\C}\alpha(\{a\})^2.
\]
This is a convex function of $\alpha$, since it is the squared
$\ell^2$ norm of its atom masses, and $\Col(\alpha)\leq1$.
Write $\mathscr{D}(a,r)=\{z\in\C:|z-a|<r\}$.

The next lemma bounds the leading logarithmic energy of a regularised
atomic measure using only its density and weak limit. It will provide
the upper bound in dimension two.
\begin{lem}\label{LemDensityUpper}
Let $(\mu_\varepsilon)_{0<\varepsilon<1}$ be probability measures on
$\C$ with uniformly bounded second moments and densities satisfying
$\|f_\varepsilon\|_\infty\leq C\varepsilon^{-2}$ for a fixed $C<\infty$.
Suppose that $\mu_\varepsilon$ converges weakly to a finitely supported
probability measure $\alpha$. Then
\begin{equation}\label{EqDensityUpper}
 \limsup_{\varepsilon\downarrow0}
 \frac{\cE(\mu_\varepsilon)}{\log(1/\varepsilon)}
 \leq\Col(\alpha).
\end{equation}
\end{lem}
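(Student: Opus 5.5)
Fix a small radius $\delta>0$, smaller than half the minimal distance between distinct atoms of $\alpha$, and split the energy according to whether $|x-y|<\delta$. The negative part is harmless: by \eqref{EqNegativePart}, $I_-(\mu_\varepsilon)$ is bounded uniformly in $\varepsilon$, and pairs with $|x-y|\geq\delta$ contribute at most $\log(1/\delta)$ to $\cE$. Both are $o(\log(1/\varepsilon))$. So it suffices to bound
\[
 J_\varepsilon(\delta)=\iint_{\{|x-y|<\delta\}}\log\frac1{|x-y|}\,\mu_\varepsilon(\mathrm{d}x)\mu_\varepsilon(\mathrm{d}y)
\]
from above by $(\Col(\alpha)+o_\delta(1))\log(1/\varepsilon)+O_\delta(1)$.

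For fixed $x$, the inner integral $\int_{|x-y|<\delta}\log\frac1{|x-y|}\,\mu_\varepsilon(\mathrm{d}y)$ is estimated using only the density bound and the local mass $m=\mu_\varepsilon(\mathscr{D}(x,\delta))$. Given mass $m$ and density at most $C\varepsilon^{-2}$, the integral of the decreasing function $\log(1/|u|)$ is maximised by packing the mass into the disc of radius $r$ around $x$ with $\pi r^2 C\varepsilon^{-2}=m$ (bathtub principle). This yields
\[
 \int_{\mathscr{D}(x,\delta)}\log\frac1{|x-y|}\,\mu_\varepsilon(\mathrm{d}y)\le m\log\frac1\varepsilon+m\cdot\tfrac12\log\frac{\pi}{Cm}+O(m)
 \le m\log\frac1\varepsilon+O(1),
\]
since $m\log(1/m)$ is bounded. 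Hence
\[
 J_\varepsilon(\delta)\le\log\frac1\varepsilon\int\mu_\varepsilon(\mathscr{D}(x,\delta))\,\mu_\varepsilon(\mathrm{d}x)+O(1).
\]

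It remains to show that
\[
 \limsup_{\varepsilon\downarrow0}\int\mu_\varepsilon(\mathscr{D}(x,\delta))\,\mu_\varepsilon(\mathrm{d}x)=\limsup_{\varepsilon\downarrow0}(\mu_\varepsilon\otimes\mu_\varepsilon)\bigl(\{|x-y|<\delta\}\bigr)\le\Col(\alpha)+o_\delta(1).
\]
Since $\mu_\varepsilon\otimes\mu_\varepsilon\to\alpha\otimes\alpha$ weakly, the portmanteau theorem on the closed set $\{|x-y|\le\delta\}$ bounds the limsup by $(\alpha\otimes\alpha)(\{|x-y|\le\delta\})$. By the choice of $\delta$, this equals $\sum_a\alpha(\{a\})^2=\Col(\alpha)$, with no error term at all. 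Dividing by $\log(1/\varepsilon)$ gives \eqref{EqDensityUpper}.

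The main obstacle is the bathtub step: one must check that the constants are uniform in $x$ and $\varepsilon$. This uses only $\|f_\varepsilon\|_\infty\le C\varepsilon^{-2}$ and $m\le1$. The $\log^+$ truncation near $|x-y|\ge1$ also needs care if $\delta>1$, but one simply takes $\delta<1$.
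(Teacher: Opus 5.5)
Your argument is correct and is essentially the paper's proof. Where you use the bathtub principle to bound the near-diagonal inner integral by $m\log\frac1\varepsilon+O(1)$, the paper uses the pointwise inequality $\log^+\frac1r\le\log\frac1\varepsilon\,\1_{\{r\le\delta\}}+\log\frac1\delta+\log^+\frac{\varepsilon}{r}$ (for $\varepsilon<\delta<1$) and bounds $\E\log^+\frac{\varepsilon}{|\mathsf{Z}_\varepsilon-\mathsf{W}_\varepsilon|}\le\frac{\pi C}{2}$ directly from the density bound; both then conclude with the same Portmanteau step on $\{|x-y|\le\delta\}$. One slip is harmless: the bathtub radius $r=\varepsilon\sqrt{m/(\pi C)}$ gives the correction $\frac m2\log\frac{\pi C}{m}$, not $\frac m2\log\frac{\pi}{Cm}$, and this is still $O(1)$ uniformly in $m\in[0,1]$.
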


\begin{proof}
Take independent $\mathsf{Z}_\varepsilon,\mathsf{W}_\varepsilon$ with
law $\mu_\varepsilon$, and put $L_\varepsilon=\log(1/\varepsilon)$.
Choose $0<\delta<1$ smaller than the distance between any two distinct
atoms of $\alpha$; if there is only one atom, choose any such $\delta$.
For $0<\varepsilon<\delta$ and $r>0$,
\[
 \log^+\frac1r
 \leq L_\varepsilon\1_{\{r\leq\delta\}}
      +\log\frac1\delta+\log^+\frac{\varepsilon}{r}.
\]
The density bound gives
\[
 \E\left[\log^+\frac{\varepsilon}
 {|\mathsf{Z}_\varepsilon-\mathsf{W}_\varepsilon|}\right]
 \leq C\varepsilon^{-2}
 \int_{\{u\in\C:|u|<\varepsilon\}}\log\frac{\varepsilon}{|u|}\,\mathrm{d}^2u
 =\frac{\pi C}{2}.
\]
Consequently,
\[
 \cE(\mu_\varepsilon)\leq I_+(\mu_\varepsilon)
 \leq L_\varepsilon
 \Prob(|\mathsf{Z}_\varepsilon-\mathsf{W}_\varepsilon|\leq\delta)
 +\log\frac1\delta+\frac{\pi C}{2}.
\]
Weak convergence of the product measures and the Portmanteau theorem,
applied to the closed set $\{(z,w):|z-w|\leq\delta\}$, give
\[
 \limsup_{\varepsilon\downarrow0}
 \Prob(|\mathsf{Z}_\varepsilon-\mathsf{W}_\varepsilon|\leq\delta)
 \leq(\alpha\otimes\alpha)\{(z,w):|z-w|\leq\delta\}
 =\Col(\alpha).
\]
Division by $L_\varepsilon$ proves the claim.
\end{proof}

The next lemma turns mass in small disjoint disks into a lower bound
for logarithmic energy. It will quantify the contribution of the two
clusters arising from the all-common configuration in dimension three.
\begin{lem}\label{LemClusterLower}
Let $\mu$ be a probability measure on $\C$ with finite second moment.
If $\mathscr{D}_1,\ldots,\mathscr{D}_k$ are pairwise disjoint disks
of common radius $0<r<1/2$, then
\begin{equation}\label{EqClusterLower}
 \cE(\mu)\geq
 \sum_{j=1}^k\mu(\mathscr{D}_j)^2\log\frac1{2r}-m_2(\mu).
\end{equation}
\end{lem}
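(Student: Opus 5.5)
Write $\cE(\mu)=I_+(\mu)-I_-(\mu)$. This is legitimate because $I_-(\mu)\leq m_2(\mu)<\infty$ by \eqref{EqNegativePart}, so $\cE(\mu)\in(-\infty,+\infty]$ is well defined with this splitting. Using \eqref{EqNegativePart} once more gives
\[
 \cE(\mu)\geq I_+(\mu)-m_2(\mu).
\]
So it suffices to show
\[
 I_+(\mu)\geq\sum_{j=1}^k\mu(\mathscr{D}_j)^2\log\frac1{2r}.
\]
If $I_+(\mu)=+\infty$ there is nothing to prove, so we may argue in $[0,+\infty]$ throughout.

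For the lower bound on $I_+$, take independent $\mathsf{Z},\mathsf{W}$ with law $\mu$. The integrand $\log^+(1/|z-w|)$ is nonnegative, so we may restrict the double integral to the disjoint union of the product sets $\mathscr{D}_j\times\mathscr{D}_j$. These product sets are pairwise disjoint because the disks are. On $\mathscr{D}_j\times\mathscr{D}_j$ we have $|z-w|<2r<1$ by the triangle inequality through the common centre. Hence
\[
 \log^+\frac1{|z-w|}=\log\frac1{|z-w|}\geq\log\frac1{2r}>0
\]
there, where positivity uses $r<1/2$. This gives
\[
 I_+(\mu)\geq\sum_{j=1}^k(\mu\otimes\mu)(\mathscr{D}_j\times\mathscr{D}_j)\log\frac1{2r}
 =\sum_{j=1}^k\mu(\mathscr{D}_j)^2\log\frac1{2r}.
\]
Combining this with the first step yields \eqref{EqClusterLower}.

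There is no real obstacle here. The only points needing care are that the decomposition $\cE=I_+-I_-$ is valid, which holds since $I_-<\infty$, and that $\log\frac1{2r}$ is positive, which is why the hypothesis $r<1/2$ is imposed. The disjointness hypothesis is used only to avoid double counting across different pairs $(j,j)$; the common radius is used only so that a single constant $\log\frac1{2r}$ applies to every disk.
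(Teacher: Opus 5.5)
Your proposal is correct and follows the paper's proof: you bound $I_+(\mu)$ from below by restricting to the disjoint product sets $\mathscr{D}_j\times\mathscr{D}_j$, where $|z-w|<2r$, and then subtract the bound $I_-(\mu)\leq m_2(\mu)$ from \eqref{EqNegativePart}. Your handling of the case $I_+(\mu)=+\infty$ also matches the paper's remark that the argument holds in the extended sense.
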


\begin{proof}
On each $\mathscr{D}_j\times\mathscr{D}_j$, the positive logarithmic
kernel is at least $\log(1/(2r))$. The product sets are disjoint, so
summing their contributions bounds $I_+(\mu)$ from below.
Subtract \eqref{EqNegativePart}. The same argument holds in the
extended sense if $I_+(\mu)=+\infty$.
\end{proof}

\section{The semicircular Wigner counterexample}\label{SectionWigner}

We first record a lower bound on the energy excess above the semicircle
law using the effective potential outside its support.
It will turn the positive-probability eigenvalue event in the proof of
Theorem~\ref{ThmWignerMain} into an explicit energy gap. Put
\[
 U_\sigma(x)=\int_\R\log\frac1{|x-y|}\,\sigma(\mathrm{d}y),
 \qquad
 v_{\mathrm{sc}}(x)=
 \begin{cases}
  0,&|x|\leq2,\\
  \displaystyle\int_{[2,|x|]}\sqrt{r^2-4}\,\mathrm{d}r,&|x|>2.
 \end{cases}
\]
The standard semicircle potential formula gives
$U_\sigma(x)=1/2-x^2/4$ for $|x|\leq2$; see
\cite[Exercise~2.6.4]{AndersonGuionnetZeitouni2010}.
For $x>2$, the Stieltjes transform
\cite[(2.4.6)]{AndersonGuionnetZeitouni2010} gives
$U_\sigma'(x)=-(x-\sqrt{x^2-4})/2$.
Integration from $2$, continuity, and evenness therefore yield
\[
 \frac{x^2}{2}+2U_\sigma(x)=1+v_{\mathrm{sc}}(x),
 \qquad x\in\R.
\]
If $\mu$ is a probability measure on $\R$ with compact support and a
bounded density, all the following integrals are absolutely convergent.
Positivity of logarithmic energy
for zero-mass signed measures
\cite[proof of Lemma~2.6.2(d), (2.6.20)]{AndersonGuionnetZeitouni2010}
and expansion around $\sigma$ give
\begin{align}
 \cF_{\R}(\mu)-\frac34
 &=\int_\R v_{\mathrm{sc}}\,\mathrm{d}\mu
   +\iint_{\R\times\R}\log\frac1{|x-y|}
        \,(\mu-\sigma)(\mathrm{d}x)(\mu-\sigma)(\mathrm{d}y)
 \notag\\
 &\geq\int_\R v_{\mathrm{sc}}\,\mathrm{d}\mu.
 \label{EqSemicircleGap}
\end{align}

\begin{proof}[Proof of Theorem~\ref{ThmWignerMain}]
The semicircle law is centered with second moment one, so the array
has the required Wigner law and
\[
 m_2(\rho_n)=\frac1{n^2}\E[\Tr(\mathbf{M}_n^2)]
 =\frac1{n^2}\left(n+2\binom n2\right)=1.
\]
Each entry lies in $[-2,2]$, hence
$\|\mathbf{M}_n/\sqrt n\|_{\mathrm{op}}
\leq\|\mathbf{M}_n/\sqrt n\|_{\mathrm F}\leq2\sqrt n$.

For completeness, boundedness of the mean spectral density follows
from a common diagonal shift. Write the diagonal entries as
$d_n=t$ and $d_i=t+u_i$ for $i<n$, a change of variables with Jacobian
one. For fixed $u$ and fixed off-diagonal entries, the $k$th ordered
eigenvalue of $\mathbf{M}_n/\sqrt n$ is $t/\sqrt n+\beta_k$.
The joint diagonal density is at most $\pi^{-n}$, and its support
forces $u\in[-4,4]^{n-1}$. Integrating over $t$, then over $u$ and the
off-diagonal entries, gives for every Borel set $B\subset\R$ of finite
Lebesgue measure
\[
 \Prob\bigl(\lambda_k(\mathbf{M}_n/\sqrt n)\in B\bigr)
 \leq\sqrt n\,8^{n-1}\pi^{-n}|B|.
\]
Thus $\rho_n$ has a bounded density, including when $n=1$.
The energy is finite by the bounds in Section~\ref{SectionPreliminaries}.

Clearly $\rho_1=\sigma$. Write
$\mathbf{M}_2=\left(\begin{smallmatrix}X&Y\\Y&Z\end{smallmatrix}\right)$
and let $A=\{X,Y,Z\in(3/2,2)\}$, which has probability $\pentry_0^3$.
The Rayleigh quotient at $(1,1)^{\mathsf T}/\sqrt2$ gives, on $A$,
\[
 \lambda_{\max}(\mathbf{M}_2/\sqrt2)
 \geq\frac{X+Z+2Y}{2\sqrt2}>\frac3{\sqrt2}>2.
\]
In particular, $\rho_2((2,\infty))\geq\pentry_0^3/2>0$.
Since $v_{\mathrm{sc}}$ is nonnegative and increasing on $(2,\infty)$,
\eqref{EqSemicircleGap} implies
\[
 \cF_{\R}(\rho_2)-\cF_{\R}(\rho_1)
 \geq\frac{\pentry_0^3}{2}v_{\mathrm{sc}}(3/\sqrt2)
 =\frac{\pentry_0^3}{2}\left(\frac34-\log2\right)>0.
\]
The last equality follows by integrating $\sqrt{r^2-4}$; positivity
also follows directly from the defining integral for $v_{\mathrm{sc}}$.
\end{proof}

\section{The Gaussian-regularised Bernoulli counterexample}
\label{SectionComplex}

\subsection{Regularity and the upper bound}

We first establish the regularity assertions of
Theorem~\ref{ThmComplexMain} and a general small-noise upper bound.
The latter will reduce the dimension-two estimate to a bound on atom
masses. Use $\pentry,\qentry,\mathsf{X}$ as in
Theorem~\ref{ThmComplexMain}.
Directly, $\E\mathsf{X}=0$ and $\E|\mathsf{X}|^2=1$. Therefore
$\E\mathsf{Y}_\varepsilon=0$ and
$\E|\mathsf{Y}_\varepsilon|^2=1$. Its density is
\[
 \frac{1+\varepsilon^2}{\pi\varepsilon^2}
 \left[
 \qentry\exp\left(-\frac{|\sqrt{1+\varepsilon^2}\,z+1/\sqrt{255}|^2}
                            {\varepsilon^2}\right)
 +\pentry\exp\left(-\frac{|\sqrt{1+\varepsilon^2}\,z-\sqrt{255}|^2}
                            {\varepsilon^2}\right)
 \right].
\]
This Gaussian mixture is strictly positive, bounded, and real analytic
on $\R^2$. Since $\mathsf{X}$ is bounded and $|\mathsf{G}|^2$ is
exponential with mean one,
\[
 |\mathsf{Y}_\varepsilon|^2
 \leq\frac{2(255+\varepsilon^2|\mathsf{G}|^2)}{1+\varepsilon^2}
\]
implies a finite exponential-square moment for sufficiently small
positive exponent. Thus the entry law has sub-Gaussian tails and
moments of every order.

Realise all matrices using an i.i.d.\ Bernoulli array
$\mathbf{X}_n$ and an independent standard complex Ginibre array
$\mathbf{G}_n$, so that
\[
 \frac{\mathbf{M}_{n,\varepsilon}}{\sqrt n}
 =\frac{\mathbf{X}_n+\varepsilon\mathbf{G}_n}
        {\sqrt{n(1+\varepsilon^2)}}.
\]
The orthogonal decomposition of the Gaussian matrix into its scalar
and trace-free parts is
\[
 \mathbf{G}_n=\frac{\mathsf{g}_n}{\sqrt n}\mathbf{I}+\mathbf{R}_n,
 \qquad
 \mathsf{g}_n=\frac1{\sqrt n}\Tr\mathbf{G}_n.
\]
Here $\mathsf{g}_n$ is standard circular Gaussian and is independent
of $(\mathbf{R}_n,\mathbf{X}_n)$. Scalar translation of eigenvalues
therefore makes $\nu_{n,\varepsilon}$ a convolution with the law of
$\varepsilon\mathsf{g}_n/(n\sqrt{1+\varepsilon^2})$. In particular,
it has density $f_{n,\varepsilon}$ satisfying
\begin{equation}\label{EqSpectralDensity}
 \|f_{n,\varepsilon}\|_\infty
 \leq\frac{n^2(1+\varepsilon^2)}{\pi\varepsilon^2}.
\end{equation}
Schur triangularisation gives
$\sum_j|\lambda_j(\mathbf{A})|^2\leq\|\mathbf{A}\|_{\mathrm F}^2$,
and hence
\begin{equation}\label{EqSpectralMoment}
 m_2(\nu_{n,\varepsilon})
 \leq\frac1{n^2}\E\|\mathbf{M}_{n,\varepsilon}\|_{\mathrm F}^2=1.
\end{equation}
Thus all these measures have finite logarithmic energy.

For fixed $n$, continuity of the roots of a monic polynomial as an
unordered multiset, followed by bounded convergence against bounded
continuous test functions, gives
\[
 \nu_{n,\varepsilon}\ \overset{\mathrm{w}}{\longrightarrow}\
 \nu_n^0\overset{\mathrm{def}}{=}\E[\mu_{\mathbf{X}_n/\sqrt n}]
 \qquad(\varepsilon\downarrow0).
\]
The limit has finite support. Applying Lemma~\ref{LemDensityUpper}
with \eqref{EqSpectralDensity}, and using \eqref{EqSpectralMoment}
to discard the bounded quadratic term after division by
$\log(1/\varepsilon)$, yields
\begin{equation}\label{EqSmallNoiseUpper}
 \limsup_{\varepsilon\downarrow0}
 \frac{\cF_{\C}(\nu_{n,\varepsilon})}{\log(1/\varepsilon)}
 \leq\Col(\nu_n^0).
\end{equation}

The following estimate controls the squared atom masses in dimension
two, together with
\eqref{EqSmallNoiseUpper}, it gives the upper half of the final
comparison. Put $c=-1/\sqrt{255}$. In dimension two, the all-common configuration
$\mathbf{X}_2=c\mathbf{J}_2$, where $\mathbf{J}_n$ denotes the
all-ones matrix, has probability $\qentry^4$ and normalized spectrum
$\{0,c\sqrt2\}$. Thus
\[
 \nu_2^0=\qentry^4\alpha_0+(1-\qentry^4)\beta,
 \qquad
 \alpha_0=\frac12(\delta_0+\delta_{c\sqrt2}),
\]
for a finitely supported probability measure $\beta$. Convexity of
$\Col$, together with $\Col(\alpha_0)=1/2$ and $\Col(\beta)\leq1$,
gives
\begin{equation}\label{EqDimensionTwoUpper}
 \Col(\nu_2^0)\leq1-\frac{\qentry^4}{2}
 \leq\frac12+2\pentry=\frac{65}{128}.
\end{equation}
The second inequality uses $(1-\pentry)^4\geq1-4\pentry$.

\subsection{The lower bound and comparison}

We now obtain the complementary lower bound from the two spectral
clusters of the all-common configuration in dimension three. This will
exceed the dimension-two upper bound on the logarithmic scale.
In dimension three, the all-common configuration has probability
$\qentry^9$. Its normalized matrix
$\mathbf{A}=c\mathbf{J}_3/\sqrt3$ is normal with spectrum
$\{0,0,\lambda_\star\}$, where $\lambda_\star=c\sqrt3\neq0$.
Fix $R\geq1$ and put
\[
 P_R=\Prob(\|\mathbf{G}_3\|_{\mathrm{op}}\leq R).
\]
On this Gaussian event and the all-common Bernoulli event, the
regularised normalized matrix $\mathbf{A}_\varepsilon$ satisfies,
for $0<\varepsilon<1$,
\[
 \|\mathbf{A}_\varepsilon-\mathbf{A}\|_{\mathrm{op}}
 \leq\frac{\varepsilon^2}{2}\|\mathbf{A}\|_{\mathrm{op}}
      +\frac{\varepsilon R}{\sqrt3}
 \leq K_R\varepsilon,
 \qquad
 K_R=\frac{\|\mathbf{A}\|_{\mathrm{op}}}{2}+\frac R{\sqrt3}.
\]
Set $r_\varepsilon=2K_R\varepsilon$.
For sufficiently small $\varepsilon$ (depending on $R$),
$r_\varepsilon<\min\{1/2,|\lambda_\star|/3\}$.
The normal-matrix resolvent bound implies that, for every $t\in[0,1]$,
the spectrum of
$\mathbf{A}+t(\mathbf{A}_\varepsilon-\mathbf{A})$ lies within
$K_R\varepsilon$ of $\{0,\lambda_\star\}$.
Continuity of the eigenvalue multiset along this path therefore keeps
two eigenvalues in $\mathscr{D}(0,r_\varepsilon)$ and one in
$\mathscr{D}(\lambda_\star,r_\varepsilon)$, counted with algebraic
multiplicity.

The Bernoulli and Gaussian events are independent. Taking expected
empirical measures gives
\[
 \nu_{3,\varepsilon}(\mathscr{D}(0,r_\varepsilon))
 \geq\frac23\qentry^9P_R,
 \qquad
 \nu_{3,\varepsilon}(\mathscr{D}(\lambda_\star,r_\varepsilon))
 \geq\frac13\qentry^9P_R.
\]
The disks are disjoint. Since $\cF_{\C}=\cE+m_2$,
Lemma~\ref{LemClusterLower} gives
\[
 \cF_{\C}(\nu_{3,\varepsilon})
 \geq\frac59\qentry^{18}P_R^2
       \log\frac1{4K_R\varepsilon}.
\]
Divide by $\log(1/\varepsilon)$ and let $\varepsilon\downarrow0$
with $R$ fixed. Then let $R\uparrow\infty$, so that $P_R\uparrow1$.
We obtain
\begin{equation}\label{EqDimensionThreeLower}
 \liminf_{\varepsilon\downarrow0}
 \frac{\cF_{\C}(\nu_{3,\varepsilon})}{\log(1/\varepsilon)}
 \geq\frac59\qentry^{18}
 \geq\frac59(1-18\pentry)=\frac{595}{1152}.
\end{equation}

\begin{proof}[Proof of Theorem~\ref{ThmComplexMain}]
The regularity, normalization, and finite-energy assertions were
proved above. Combining \eqref{EqSmallNoiseUpper},
\eqref{EqDimensionTwoUpper}, and \eqref{EqDimensionThreeLower}, we have
\[
 \limsup_{\varepsilon\downarrow0}
 \frac{\cF_{\C}(\nu_{2,\varepsilon})}{\log(1/\varepsilon)}
 \leq\frac{65}{128}=\frac{585}{1152}
 <\frac{595}{1152}
 \leq
 \liminf_{\varepsilon\downarrow0}
 \frac{\cF_{\C}(\nu_{3,\varepsilon})}{\log(1/\varepsilon)}.
\]
The positive separation of these bounds, and
$\log(1/\varepsilon)>0$ for $\varepsilon<1$, give the required strict
inequality for every sufficiently small positive $\varepsilon$.
\end{proof}

\bibliographystyle{acm}
\bibliography{non_monotonicity_entropy_v2}

\end{document}